\documentclass{amsart}
\usepackage{graphics}
\usepackage{graphicx}

\usepackage{amsfonts,amssymb,amsopn,amscd,amsmath}
 \usepackage{mathrsfs}
\usepackage[usenames,dvipsnames]{xcolor}
 \newtheorem{thm}{Theorem}[section]
 \newtheorem{cor}[thm]{Corollary}
 \newtheorem{lem}[thm]{Lemma}{\rm}

 {\rm}
 \newtheorem{assumption}[thm]{Assumption}
 \newtheorem{rem}[thm]{Remark}
 
\numberwithin{equation}{section}

\begin{document}

\def\red{\color{red}}
\def\bl{\color{blue}}
\def\ora{\color{orange}}
\def\green{\color{green}}
\def\br{\color{brown}}

\def\la{\langle}
\def\ra{\rangle}
\def\e{{\rm e}}
\def\x{\mathbf{x}}
\def\by{\mathbf{y}}
\def\bz{\mathbf{z}}
\def\F{\mathcal{F}}
\def\R{\mathbb{R}}
\def\T{\mathbf{T}}
\def\N{\mathbb{N}}
\def\K{\mathbf{K}}
\def\bK{\overline{\mathbf{K}}}
\def\Q{\mathbf{Q}}
\def\M{\mathbf{M}}
\def\O{\mathbf{O}}
\def\C{\mathbf{C}}
\def\P{\mathbf{P}}
\def\Z{\mathbb{Z}}
\def\H{\mathcal{H}}
\def\A{\mathbf{A}}
\def\V{\mathbf{V}}
\def\AA{\overline{\mathbf{A}}}
\def\B{\mathbf{B}}
\def\c{\mathbf{C}}
\def\L{\mathbf{L}}
\def\bS{\mathbf{S}}
\def\H{\mathcal{H}}
\def\I{\mathbf{I}}
\def\Y{\mathbf{Y}}
\def\X{\mathbf{X}}
\def\G{\mathbf{G}}
\def\B{\mathbf{B}}
\def\f{\mathbf{f}}
\def\z{\mathbf{z}}
\def\y{\mathbf{y}}
\def\d{\hat{d}}
\def\bx{\mathbf{x}}
\def\y{\mathbf{y}}
\def\h{\mathbf{h}}
\def\g{\mathbf{g}}
\def\v{\mathbf{v}}
\def\tv{\tilde{\mathbf{v}}}
\def\g{\mathbf{g}}
\def\tg{\tilde{\mathbf{g}}}
\def\w{\mathbf{w}}
\def\b{\mathcal{B}}
\def\a{\mathbf{a}}
\def\q{\mathbf{q}}
\def\u{\mathbf{u}}
\def\s{\mathcal{S}}
\def\cc{\mathcal{C}}
\def\co{{\rm co}\,}
\def\cp{{\rm CP}}
\def\tx{\tilde{\x}}
\def\bP{\mathbb{P}}
\def\supmu{{\rm supp}\,\mu}
\def\supnu{{\rm supp}\,\nu}
\def\m{\mathcal{M}}
\def\bR{\mathbf{R}}
\def\om{\mathbf{\Omega}}
\def\c{\mathbf{c}}
\def\s{\mathcal{S}}
\def\k{\mathcal{K}}
\def\la{\langle}
\def\ra{\rangle}
\def\blambda{{\boldsymbol{\lambda}}}
\def\balpha{{\boldsymbol{\alpha}}}
\def\bbeta{{\boldsymbol{\beta}}}
\def\bphi{{\boldsymbol{\phi}}}
\def\bmu{{\boldsymbol{\mu}}}
\def\bnu{{\boldsymbol{\nu}}}
\def\HM{\mathbf{HM}}
\def\tHM{\widetilde{\mathbf{HM}}}
\def\tbmu{\tilde{\bmu}}

\title[Variational properties of integral discriminants]{Gaussian-like fixed point and variational properties of integral discriminants}

\author{Jean-Bernard Lasserre}
\thanks{The author is supported by the AI Interdisciplinary Institute ANITI  funding through the %french program ``Investing for the Future PI3A" under the grant agreement number 
ANITI AI Cluster program under the Grant agreement ANR-23-IACL-0002, and also by the Marie-Sklodovska-Curie european doctoral network TENORS, grant 101120296.
This research is also part of the programme DesCartes and is supported by the National Research Foundation, Prime Minister's Office, Singapore under its Campus for Research Excellence and Technological Enterprise (CREATE) programme.}
\address{Jean-Bernard Lasserre: LAAS-CNRS and Toulouse- School of Economics (TSE)\\
University of Toulouse\\
LAAS, 7 avenue du Colonel Roche\\
31077 Toulouse C\'edex 4,France}
\email{lasserre@laas.fr}
\date{}

\begin{abstract}
We consider partition functions $Z(g)=\int \exp(-g(\x))d\x$
where $g$ is a nonnegative polynomial action
(a degree-$2n$ form) vanishing only at
the origin. Such integrals, known as integral discriminants, appear in statistical
mechanics, quantum field theory, and the theory of exponential families.
We show that the associated Boltzmann measure $d\mu=\exp(-g(\x))d\x$ satisfies a fixed-point
property identity relating in a simple manner its degree-$2n$ moments to the coefficients of  $g$.
This generalizes familiar identities for the exponential distribution (degree-$1$) on the positive orthant
and the Gaussian measure (degree-$2$). 
We further show that $g$ is characterized by three variational principles, including 
a maximum-entropy principle under scaled moments constraints, extending
the Gaussian extremality principle to arbitrary even-degree homogeneous actions.
Exploiting these identities in a truncated-moment numerical scheme (known as 
the Moment-SOS hierarchy), strengthens the standard semidefinite relaxations, and
results in a much faster convergence, thus allowing more efficient approximations of the partition function
$Z(g)$ as well as moments of $\mu$. \\
{\bf Keywords:}{Integral discriminants, Boltzmann measure;  variational principle; maximum-entropy;
moments; semidefinite relaxations}\\
{\bf MSC:} 33C60 52A60 15A69 60E05 90C24 26B20 94A17 
\end{abstract}

\maketitle

\section{Introduction}

Integrals of the form $Z(g)=\int_{\R^d}\exp(g(\x))\,d\x$ 
where $g$ is a nonnegative form, arise naturally in statistical mechanics and
quantum field theory as partition functions of models with polynomial actions. In the
terminology of Dolotin, Morozov and Shakirov, they are referred to as \emph{integral
discriminants} \cite{morozov-1,morozov-2,stoyanovsky-1,stoyanovsky-2}.
Despite their elementary appearance, such integrals remain poorly
understood: Indeed although $Z(g)$ is known to be a generalized hypergeometric function of the
coefficients of $g$, explicit closed-form expressions are available only in a few very special
cases \cite{morozov-1,morozov-2}.
The above approach emphasizes the algebraic and differential structure as
a function of the couplings appearing in $g$, as a means to obtain $Z(g)$ explicitly. In contrast, the present work adopts a
complementary viewpoint and focuses on structural relations between the polynomial
action and the associated Boltzmann weight 
\[d\mu(\x)\,=\,\exp(-g(\x))\,d\x\,,\]
to provide additional characterizations of the couple $(\mu,g)$.
We restrict attention to degree-$2n$ nonnegative forms that
vanish only at the origin. Under this assumption, the integral $Z(g)$ is finite and the sublevel set
\[G\,:=\,\{\,\x\in\R^d:\: g(\x)\,\leq\,1\,\}\]
is compact.
An important property of homogeneous actions is the exact identity
\begin{equation}
\label{crucial}
\int_{\R^d}f(\x)\,\exp(-g(\x))\,d\x\,=\,\Gamma(1+\frac{d+s}{2n})\int_Gf(\x)\,d\x\,,\end{equation}
valid for any continuous and positively homogeneous observable $f$ of degree $s$. This relation
connects the Boltzmann measure with the uniform measure on $G$, and allows one to
translate moment identities for $\mu$ into geometric properties of the sublevel set $G$ of $g$. 
For instance, it has
already proved useful for studying convexity of $Z(g)$ and generalized L\"owner-John
problems \cite{lowner}, inverse moment questions \cite{recovery}, and numerical approximation of 
$Z(g)$ \cite{siaga}.

The purpose of this paper is to show that the pair $(\mu,g)$ satisfies identities that closely
parallel well-known Gaussian properties. For a Gaussian action, the covariance matrix
satisfies both a fixed-point identity and a maximum-entropy principle under second 
moment constraints. We show that analogous statements hold for arbitrary (nonnegative) even-degree
homogeneous actions.

\subsection*{Main results}
\subsection*{1. Fixed-point structure.} 
We derive a nonlinear fixed-point equation for the degree-$2n$ moments of $\mu$, in
which the same moments appear linearly in the exponent of the Boltzmann weight.
Equivalently, the coefficients of $g$ are solution of a simple linear system
involving the degree-$2n$
moment matrix of $\mu$. This provides a direct higher-degree analogue of the Gaussian
identity relating the covariance matrix to quadratic expectations.  Those linear relations 
had been already obtained in two ways: 
via Ward's identities specialized to the context of integral discriminants as in \cite{morozov-1,morozov-2}, or
via Stokes theorem applied to integration w.r.t. to Lebesgue
measure on $G$, and then combined with \eqref{crucial} as done in \cite{recovery,lowner}.
In \cite{morozov-1,morozov-2} the Ward identities are seen as a useful intermediate tool for the ultimate
goal of obtaining $Z(g)$ in closed form.  
In this work we follow a different route. Rather than viewing the Ward identities merely as constraints on moments, we reorganize a subset of them into a linear system for the coefficients of $g$
as already observed in \cite{recovery}. The novelty of the present work is to show that this system can be interpreted as a fixed-point characterization of $g$
in terms of moments of orders $2n$ and $4n$,
paralleling and extending the classical (degree-$2$) Gaussian case. Remarkably, the form $g$
 is reconstructed from finitely many statistics generated by its own Boltzmann measure. 
This viewpoint also leads naturally to variational characterizations of the pair 
$(\mu,g)$ (see below).  To the best of our knowledge, this fixed-point and variational viewpoint has not been presented before.
 
\subsection*{2. Variational principles.}
We show that $g$ is uniquely characterized by three variational properties:

-- (i) it is proportional to the best $L^2(\mu)$-approximation 
of the constant function by a degree-$2n$ form.

-- (ii) it minimizes the $L^2(\mu)$-norm among all degree-$2n$ forms 
with same $L^1(\mu)$-norm as  $g$.

-- (iii) the normalized density $p^*(\x)\propto \exp(-g(\x))$ uniquely solves
a maximum-entropy problem under 
scaled degree-$2n$ moment constraints, a higher degree  analogue of the Gaussian entropy
maximization principle.

\subsection*{3. Numerical implications.}
Since
\[Z(g)\,=\,\Gamma(1+\frac{d}{2n})\,\mathrm{vol}(G)\,,\]
computing $Z(g)$ reduces to computing the volume of the basic semi-algebraic set $G$.
As in turn the latter can be  viewed as an instance of the Generalized Moment Problem (GMP),
one may apply the so-called the Moment-SOS hierarchy
devoted to solving the GMP \cite{acta}. It consists of solving a hierarchy of semidefinite relaxations 
of increasing size. The resulting  associated sequence of optimal values is monotone non increasing
and converges to $\mathrm{vol}(G)$ from above.
In the present context of homogeneous forms, the fixed-point
identities alluded to above, imply that additional linear moment constraints (essentially Stokes theorem in disguise) can be enforced in the semidefinite relaxations\footnote{A semidefinite program is a conic convex optimization problem 
which up arbitrary precision fixed in advance,  can be solved efficiently; for more details the interested reader is referred to \cite{anjos}.} (because they are satisfied by the optimal measure $1_Gd\x$). Remarkably,
the inclusion of these additional Stokes moment constraints results in a dramatic acceleration of the convergence.

So these results provide a unified perspective on integral discriminants, 
highlighting higher-degree analogues of Gaussian extremality and fixed-point structures. They also underpin efficient numerical schemes for approximating partition functions and moments via truncated moment hierarchies.
\section{Main result}

\subsection{Notation, definitions and preliminary result}
Let $\R[\x]=\R[x_1,\ldots,x_d]$ be the ring of polynomials in the variables
$\x=(x_1,\ldots,x_d)$.
For $\balpha\in\N^d$ let
$\vert\balpha\vert:=\sum_i\alpha_i$, $\N^d_n=\{\balpha\in\N^d: \vert\balpha\vert=n\}$, and let $\v_n(\x)=(\x^{\balpha})_{\vert\balpha\vert=n}$
be the vector of all degree-$n$ monomials. 
Denote by $\H[\x]_n\subset\R[\x]$ the vector space of 
degree-$n$ forms, with usual monomial basis 
$\v_n(\x)$. For a real symmetric matrix $\A$,
the notation $\A\succeq0$ (resp. $\A\succ0$) stands for $\A$ is positive semidefinite
(resp. positive definite). A function $f:\R^d\to\R$ is positively homogeneous of degree $s\in\R$, if 
\[f(\lambda\,\x)\,=\,\lambda^s\,f(\x)\,,\quad\forall \x\in\R^d\,,\:\forall\lambda>0\,.\]

For every integer $n$,
let $s_n:=\mathrm{dim}(\mathcal{H}[\x]_n)={d-1+n\choose n}$ and for
$g\in\H[\x]_{n}$, let $\g\in\R^{s_n}$ be its vector of coefficients in
the monomial basis $\v_n(\x)$. That is:
\[\x\mapsto g(\x)\,=\,\v_n(\x)^T\g\,,\quad\forall \x\in\R^d\,.\]

For a finite Borel measure on $\R^d$ denote by 
\[\HM_{n}(\mu)\,:=\,\int \v_n(\x)\v_n(\x)^T\,d\mu(\x)\,,\]
the degree-$n$ (homogeneous) moment matrix associated with $\mu$. If
$\HM_n(\mu)\succ0$ then there exists 
a family $(P_\balpha)_{\balpha\in\N^d_n}\subset \mathcal{H}_n[\x]$
of degree-$n$ forms that are orthonormal w.r.t. $\mu$, that is,
\begin{equation}
 \int P_\balpha\,P_\bbeta\,d\mu\,=\,\delta_{\balpha=\bbeta}\,,\quad\forall \balpha,\bbeta\in\N^d_n\,.
\end{equation}
\subsection*{Christoffel-Darboux kernel}With $\HM_n(\mu)\succ0$, introduce the kernel
\[(\x,\y)\mapsto K^\mu_n(\x,\y)\,:=\,\sum_{\balpha\in\N^d_n}P_\balpha(\x)\,P_\balpha(\y)\,,\quad\forall \x,\y\in\R^d\,,\]
which is called the \emph{Christoffel-Darboux kernel} (CD-kernel)  associated with $\mu$. The kernel
$K^\mu_n$ has the reproducing property,
\[h(\x)\,=\,\int K^\mu_n(\x,\y)\,h(\y)\,d\mu(\y)\,,\quad\forall h\in\mathcal{H}_n[\x]\,,\]
and so with the scalar product $\langle g,h\ra=\int fg\,d\mu$,  
$(\mathcal{H}_n[\x],\la\cdot,\cdot\ra)$ is a reproducing kernel Hilbert space (RKHS).

Throughout the rest of the paper we consider the following assumption.
\begin{assumption}
\label{ass-1}
$g\in\H[\x]_{2n}$ is a nonnegative degree-$2n$ form such that $g(\x)=0$ only if $\x=0$.
(Then the sublevel set $G:=\{\x\in\R^d:g(\x)\,\leq\,1\}$ is compact.)
\end{assumption}

With $g\in\H[\x]_{2n}$ satisfying Assumption
\ref{ass-1}, let $\lambda$ be the Lebesgue measure on $G:=\{\x: g(\x)\leq 1\}$, and let
$\blambda^{(2n)}=(\lambda_{\balpha})_{\balpha\in\N^d_{2n}}$ be its associated vector of degree-$2n$
moments, while $\HM_{2n}(\lambda)$ is its  degree-$2n$ moment matrix.
Let $\mu$ be the finite Borel measure on $\R^d$ with density
$\exp(-g(\x))$ w.r.t. Lebesgue measure on $\R^d$, i.e.,
\[\mu(B)\,=\,\int_B\exp(-g(\x))\,d\x\,,\quad\forall B\in\mathcal{B}(\R^d)\,,\]
and let $\bmu^{(2n)}=(\mu_{\balpha})_{\vert\balpha\vert=2n}$ be its vector of 
degree-$2n$ moments. We first recall a result stated in e.g. \cite{lowner} and \cite{morozov-2}.
\begin{thm}(\cite{morozov-2})
\label{thm1}
Let Assumption \ref{ass-1} hold. Then for every $\balpha\in\N^d$:
\begin{equation}
\label{thm1-1}
\int_G \x^{\balpha}\,d\x\,=\,\frac{1}{\Gamma(1+(d+\vert\balpha\vert)/(2n))}\int_{\R^d}\x^{\balpha}\exp(-g(\x))\,d\x\,,
\end{equation}
If $f$ is a continuous and positively homogeneous function of degree $s$,  one also has
\begin{equation}
\label{thm1-f}
\int_G f(\x)\,d\x\,=\,\frac{1}{\Gamma(1+(d+s)/(2n))}\int_{\R^d}f(\x)\exp(-g(\x))\,d\x\,.
\end{equation}
\end{thm}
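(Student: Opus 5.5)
The plan is to prove the general statement \eqref{thm1-f} directly; \eqref{thm1-1} is then the special case $f(\x)=\x^{\balpha}$, which is continuous and positively homogeneous of degree $s=\vert\balpha\vert$. The tool is the layer-cake representation of $\exp(-g)$ combined with the scaling properties of $G$ coming from homogeneity. Throughout, Assumption \ref{ass-1} is used in two ways. First, by compactness of the unit sphere there is $c>0$ with $g(\x)\geq c\,\Vert\x\Vert^{2n}$. Second, $G$ is compact, so every integral below is absolutely convergent (for $s>-d$; for polynomial $f$ we have $s\geq 0$). This absolute convergence is what justifies Fubini/Tonelli, applied to $f^+$ and $f^-$ separately if $f$ changes sign.

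\emph{Step 1 (scaling of sublevel sets).} For $t>0$ set $G_t:=\{\x: g(\x)\leq t\}$. Since $g$ is homogeneous of degree $2n$, we have $g(t^{1/(2n)}\y)=t\,g(\y)$, so $G_t=t^{1/(2n)}G$. The change of variables $\x=t^{1/(2n)}\y$ has Jacobian $t^{d/(2n)}$, and $f(\x)=t^{s/(2n)}f(\y)$. Together these give
\[\int_{G_t}f(\x)\,d\x\,=\,t^{(d+s)/(2n)}\int_G f(\y)\,d\y\,.\]

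\emph{Step 2 (layer cake).} Write $\exp(-g(\x))=\int_{g(\x)}^{\infty}e^{-t}\,dt=\int_0^\infty e^{-t}\,1_{\{g(\x)\leq t\}}\,dt$. Multiply by $f(\x)$, integrate over $\R^d$, and exchange the order of integration by Tonelli. This yields
\[\int_{\R^d}f\,e^{-g}\,d\x\,=\,\int_0^\infty e^{-t}\int_{G_t}f(\x)\,d\x\,dt\,=\,\Big(\int_G f\,d\x\Big)\int_0^\infty t^{(d+s)/(2n)}e^{-t}\,dt\,.\]
The last integral equals $\Gamma(1+(d+s)/(2n))$, and dividing by it gives \eqref{thm1-f}.

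The only delicate point is justifying the interchange of integrals and the finiteness of both sides. This requires the growth bound $g\geq c\Vert\x\Vert^{2n}$ and integrability of $\vert f\vert$ near the origin, which holds since $\vert f(\x)\vert\leq C\Vert\x\Vert^s$ with $s>-d$. Everything else is a routine computation.
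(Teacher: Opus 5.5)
Your proof is correct and complete, and it is essentially the standard argument behind this result in the cited sources (the paper itself only quotes the theorem, without proof). Homogeneity gives $\int_{\{g\le t\}} f\,dx = t^{(d+s)/(2n)}\int_G f\,dx$; integrating this against $e^{-t}$ (i.e., evaluating its Laplace transform at $1$) and exchanging the integrals by Fubini--Tonelli produces the $\Gamma$ factor, exactly as in your layer-cake computation, and your integrability remarks ($g\ge c\Vert x\Vert^{2n}$, $s>-d$, splitting $f=f^+-f^-$) correctly justify that interchange.
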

Notice that in particular, as a  consequence of Theorem \ref{thm1},
\begin{eqnarray}
\nonumber
\HM_{2n}(\lambda)&=&\int_G \v_{2n}(\x)\v_{2n}(\x)\,d\x\\
\nonumber
%\label{lem1-2}
&=&\frac{1}{\Gamma(1+(d+4n)/(2n))}\int_{\R^d}\v_{2n}(\x)\v_{2n}(\x)^T\exp(-g(\x))\,d\x\\
\label{thm1-2}
&=&\frac{1}{\Gamma(1+(d+4n)/(2n))}\HM_{2n}(\mu)\,.
\end{eqnarray}

\subsection{A Gaussian-like fixed point property}

In order to present the result in a very compact form, and given the finite Borel measure 
$\mu=\exp(-g(\x))d\x$ with $0<g\in\mathcal{H}[\x]_{2n}$, one first 
considers a basis $\{P_\balpha: \balpha\in\N^d_{2n}\}$ of $\mathcal{H}[\x]_{2n}$ with polynomials
that are orthonormal w.r.t. $\mu$, i.e.,
\[\int P_\balpha\,P_\bbeta\,d\mu\,=\,\delta_{\balpha=\bbeta}\,,\quad\forall\balpha,\bbeta\in\R^d_{2n}\,.\]
\subsection*{In an orthonormal basis $(P_\alpha)\perp\mu$} 
(As $\HM_{2n}(\mu)\succ0$, such an orthonormal family exists and can be obtained in various ways.)
Next, let $\bP_{2n}(\x):=\left(P_\balpha(\x)\right)_{\balpha\in\N^d_{2n}}$, so that
$g(\x)=\la\tg,\bP_{2n}(\x)\ra$ where $\tg$
is the vector of  coefficients of $g$ in the basis $\bP_{2n}(\x)$; recall that in contrast to the canonical basis 
$\v_n(\x)$, $\bP_{2n}(\x)$ \emph{depends} on $g$. Next, let
\[\tbmu^{(2n)}\,=\,\left(\int P_\balpha\,d\mu\right)_{\balpha\in\N^d_{2n}}\,=\,\int \bP_{2n}(\x)\,d\mu\]
be the associated vector  of degree-$2n$ moments. So expressed in the basis $\bP_{2n}(\x)$, the
homogeneous moment matrix $\tHM_{2n}(\mu)$ is simply the identity matrix $\I$ because
\[\tHM_{2n}(\mu)\,=\,\int\bP_{2n}(\x)\bP_{2n}(\x)^Td\mu\,=\,\left(\int P_\balpha\,P_\bbeta)\,d\mu\right)_{\balpha,\bbeta\in\N^d_{2n}}\,=\,\mathbf{I}\,.\]
For every integer $n$, let $c_{n}:=1+d/n$. Our first result states the following
\begin{thm}
\label{intro-th1}
With $0<g\in\mathcal{H}[\x]_{2n}$ and $\mu=\exp(-g(\x))d\x$: 
\begin{eqnarray}
\label{intro-th21-0}
\tg&=&c_{2n}\,\tbmu^{(2n)}\,=\,c_{2n}\,\int \bP_{2n}(\x)\,\exp(-\la\bP_{2n}(\x),\tg\ra)\,d\x\,.\\
\label{intro-th21-1}
g(\x)&=&c_{2n}\,\la\bP_{2n}(\x),\tbmu^{(2n)}\ra\,=\,c_{2n}\,\int K^\mu_{2n}(\x,\y)\,d\mu(\y),\quad\forall \x\in\R^d\,\\
 \label{intro-th21-2}
 \tbmu^{(2n)}&=&\int \bP_{2n}(\x)\,\exp(-c_{2n}\,\la\bP_{2n}(\x),\tbmu^{(2n)}\ra)\,d\x\,.
 %label{intro-th21-3} \g
 \end{eqnarray}
\end{thm}
\begin{proof}
 By \cite[Lemma 1, p. 675]{recovery}, 
\begin{equation}
\label{aux-0}
\int_G \x^\balpha\,\,g(\x)\,d\x\,=\,\frac{d+\vert\balpha\vert}{d+2n+\vert\balpha\vert}\int_G \x^\balpha\,d\x\,,\quad
\forall\balpha\in \N^d\,.\end{equation}
Combining with \eqref{thm1-1}, and letting $c_{2n}:=1+d/(2n)$, one obtains
\begin{equation}
\label{aux-1}
\int_{\R^d} \x^\balpha\,\,g(\x)\,\exp(-g(\x))\,d\x\,=\,c_{2n}\,\int_{\R^d} \x^\balpha\,\exp(-g(\x))\,d\x\,,\quad
\forall\balpha\in \N^d_{2n}\,,\end{equation}
where one has repeated used $\Gamma(1+t)=t\Gamma(t)$.
Next, observe that
\begin{eqnarray}
\label{aux-2}
\widetilde{\HM}_{2n}(\mu)\,\tg&=&\int_{\R^d} \left(P_\balpha(\x)\right)_{\balpha\in\N^d_{2n}}\,g(\x)\,\exp(-g(\x))\,d\x\\
\nonumber
&=&c_{2n}\,\int_{\R^d} \left(P_\balpha(\x)\right)_{\balpha\in\N^d_{2n}}\exp(-g(\x))\,d\x\quad\mbox{[by \eqref{aux-1}]}\\
\nonumber
&=&c_{2n}\,\tilde{\bmu}^{(2n)}\,,
\end{eqnarray}
which is \eqref{intro-th21-0} because $\widetilde{\HM}_{2n}(\mu)=\mathbf{I}$.
 \end{proof}
Theorem \eqref{intro-th1} states that 
$\mu$ satisfies a remarkable ``fixed-point" property in 
a quite simple compact form, as for Gaussian measures.  
Namely, the degree-$2n$ moment vector $\tbmu^{(2n)}$ in
the left-hand-side of \eqref{intro-th21-2} also appears linearly in
the argument of the exponential density in the right-hand-side.  
Similarly, the vector of coefficients $\tg$ in
the left-hand-side of \eqref{intro-th21-0} also appears linearly in
the argument of the exponential density in the right-hand-side.  
Moreover
the degree-$2n$ moments $\tbmu^{(2n)}$ encode the form $g$ by providing its vector of coefficients. 

In particular, and as a consequence:
\begin{cor}
\label{intro-cor1}
With $0<g\in\mathcal{H}[\x]_{2n}$ and $\mu=\exp(-g(\x))d\x$, 
let
\[g\mapsto Z(g)\,:=\,\int_{\R^d} \exp(-g(\x))\,d\x\]
be the partition function associated with $\mu$. Then
\begin{equation}
\label{intro-cor1-1}
Z(g)\,=\,\frac{4n^2}{d\,(d+2n)}\Vert \tg\Vert^2\,=\,.\frac{d+2n}{d}\Vert \tbmu^{(2n)}\Vert^2\,.
\end{equation}
\end{cor}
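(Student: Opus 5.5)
The plan is to express both squared norms as integrals against $\mu$ and then evaluate those integrals in terms of $Z(g)$ using homogeneity. Since $\tHM_{2n}(\mu)=\I$ and $g=\la\tg,\bP_{2n}\ra$,
\[
\Vert\tg\Vert^2\,=\,\tg^T\tHM_{2n}(\mu)\,\tg\,=\,\int g^2\,d\mu .
\]
So the first equality in \eqref{intro-cor1-1} amounts to computing $\int g^2\exp(-g)\,d\x$ as a multiple of $Z(g)$. The second equality then follows at once from \eqref{intro-th21-0}: since $\tg=c_{2n}\tbmu^{(2n)}$, we have $\Vert\tbmu^{(2n)}\Vert^2=\Vert\tg\Vert^2/c_{2n}^2$.

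\textbf{Step 1: reduce $\int g^2\,d\mu$ to $\int g\,d\mu$.} The identity \eqref{aux-1} holds for each monomial $\x^\balpha$ with $\vert\balpha\vert=2n$. By linearity it extends to every form in $\mathcal{H}[\x]_{2n}$, and in particular to $g$ itself. This gives
\[
\int g^2\,d\mu\,=\,c_{2n}\int g\,d\mu,\qquad c_{2n}=1+\frac{d}{2n}.
\]
Equivalently, $\la\tg,\tg\ra=c_{2n}\la\tg,\tbmu^{(2n)}\ra$, which is just \eqref{intro-th21-0} paired with $\tg$.

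\textbf{Step 2: compute $\int g\,d\mu$ in terms of $Z(g)$.} Apply \eqref{thm1-f} with $f=g$, which is homogeneous of degree $s=2n$:
\[
\int g\,d\mu\,=\,\Gamma\Bigl(2+\frac{d}{2n}\Bigr)\int_G g\,d\x .
\]
Next use \eqref{aux-0} with $\balpha=0$, which gives $\int_G g\,d\x=\frac{d}{d+2n}\,\mathrm{vol}(G)$. Finally, $Z(g)=\Gamma(1+d/(2n))\,\mathrm{vol}(G)$ by \eqref{thm1-1} with $\balpha=0$. The recursion $\Gamma(2+t)=(1+t)\Gamma(1+t)$ with $t=d/(2n)$ then yields
\[
\int g\,d\mu\,=\,\frac{d}{2n}\,Z(g).
\]
As a sanity check, this also follows directly by differentiating $Z(tg)=t^{-d/(2n)}Z(g)$ at $t=1$.

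\textbf{Step 3: assemble.} Combining the two steps,
\[
\Vert\tg\Vert^2=c_{2n}\,\frac{d}{2n}\,Z(g)=\frac{d(d+2n)}{4n^2}\,Z(g),
\]
which is the first equality. Dividing by $c_{2n}^2=(d+2n)^2/(4n^2)$ gives $\Vert\tbmu^{(2n)}\Vert^2=\frac{d}{d+2n}\,Z(g)$, which is the second equality.

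The only delicate point is the bookkeeping of the Gamma factors in Step 2. Everything else follows immediately from Theorem \ref{intro-th1} and $\tHM_{2n}(\mu)=\I$. The stray ``$=\,.$'' in the displayed statement is a typo and should simply be read as an equality.
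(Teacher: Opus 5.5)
Your proof is correct and is essentially the paper's argument: both combine the fixed-point identity $\tg=c_{2n}\tbmu^{(2n)}$ with $\int g\,d\mu=\frac{d}{2n}Z(g)$. The paper pairs that identity with $\tbmu^{(2n)}$ to get $c_{2n}\Vert\tbmu^{(2n)}\Vert^2=\la\tg,\tbmu^{(2n)}\ra=\int g\,d\mu$ and simply asserts the value $\frac{d}{2n}Z(g)$ (the first Ward identity, derived only later in the paper), whereas you pair it with $\tg$ via $\Vert\tg\Vert^2=\int g^2\,d\mu$ and actually justify that value through \eqref{thm1-f}, \eqref{aux-0} with $\balpha=0$ and the Gamma recursion, or via the scaling $Z(tg)=t^{-d/(2n)}Z(g)$.
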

\begin{proof}
 By \eqref{intro-th21-0} 
 \[c_{2n}\Vert\tilde{\bmu}^{(2n)}\Vert^2\,=\,\la\tg,\tilde{\bmu}^{(2n)}\ra\,=\,\la\tg,\int \bP_{2n}(\x)\,d\mu\ra\,=\,\int g\,d\mu\,=\,\frac{d}{2n}\,Z(g)\,.\]
 Finally, $\Vert\tg\Vert^2=c_{2n}^2\Vert\tilde{\bmu}^{(2n)}\Vert^2$ implies 
 $\Vert\tg\Vert^2=Z(g)d(d+2n)/(4n^2)$.
\end{proof}
The expression of $Z(g)$ in \eqref{intro-cor1-1} is particularly compact and simple, but
of course the vector $\tg$ in the basis $\bP_{2n}(\x)$ is \emph{not} directly available for 
the input polynomial $g\in \mathcal{H}[\x]_{2n}$. 
Therefore we next provide the analogues of Theorem \ref{intro-th1} and Corollary \ref{intro-cor1} 
when using
the usual monomial basis $\v_{2n}(\x)=(\x^\balpha)_{\balpha\in\N^d_{2n}}$, which in contrast to $\bP_{2n}(\x)$,
does \emph{not} depend on $g$. They  now involve the inverse $\HM_{2n}(\mu)^{-1}$ of the moment matrix (in the monomial basis). 
\subsection*{In the canonical basis} 
Namely, with now $\g$ (resp. $\HM_{2n}(\mu)$) the vector of $g$ (resp. moment matrix of $\mu$)
in the canonical basis $\v_{2n}(\x)$, \eqref{aux-2} translates to
%\begin{equation}
% \HM_{2n}\,\g\,=\,\bmu^{(2n)}\,,
%\end{equation}
%from which one obtains
\begin{eqnarray}
\label{intro-monom-1}
\g&=&c_{2n}\,
\HM_{2n}(\mu)^{-1}\,\bmu^{(2)}\\
\label{intro-monom-11}
\bmu^{(2n)}&=&\int \v_{2n}(\x)\,\exp(-c_{2n}\,\la\v_{2n}(\x),\HM_2(\mu)^{-1}\bmu^{(2)}\ra)\,d\x\,.
\end{eqnarray}
The proof mimics that of Theorem \ref{intro-th1} and one obtains \eqref{aux-2} with now $\HM_{2n}(\mu)$
instead of $\widetilde{\HM}_{2n}(\mu)$, whence $\HM_{2n}(\mu)^{-1}$ in \eqref{intro-monom-1}
and in \eqref{intro-monom-11}.

In particular, in the degree-$2$ (Gaussian) case (i.e., with $2n=2$) with $g(\x)=\x^T\Sigma\x)$,
let $\nu:=\mu/\int \exp(-g(\x))d\x$ be the Gaussian probability measure. Then 
is expressed in terms of the degree-$4$ moments (in $\HM_2(\nu)$) and the degree-$2$ moments $\bnu^{(2)}$ being the vectorization $\mathrm{vec}(\Sigma)$ 
of $\Sigma$, so that
\[\bnu^{(2)}\:(=\,\mathrm{vec}(\Sigma))\,=\,\frac{
\int_{\R^d} \v_2(\x)\exp(-c_2\,\la\v_2(\x),\HM_2(\nu)^{-1}\bnu^{(2)}\ra)\,d\x}{\int_{\R^d}\exp(-g(\x))d\x}\,\]
%&=&\int \v_2(\x)\exp(-\x^T\Sigma\x)\,d\x\,,
%\end{eqnarray*}
which is the vectorization of the well-known (Gaussian) identity
\[\Sigma\,=\,\frac{\int \v_1(\x)\v_1(\x)^T\exp(-\v_1(\x)^T\Sigma^{-1}\v_1(\x))\,d\x}
{\int \exp(-g(\x))d\x}\,.\]
Similarly for the exponential measure $d\mu=\exp(-\blambda^T\x)\,d\x$ on $\R^d_+$ with $0<\blambda\in\R^d_+$, one obtains
\[\bmu^{(1)}\,=\,\int \v_1(\x)\exp(-c_1\,\la\v_1(\x),\HM_1(\mu)^{-1}\bmu^{(1)}\ra)\,d\x\,\]
\begin{rem}
\label{rem-stokes}
 Eq. \eqref{intro-monom-1}, or $\HM_{2n}(\mu)\,\g=c_{2n}\bmu^{(2n)}$, follows 
 from the  crucial identity \eqref{aux-0} from \cite[Lemma 1, p. 675]{recovery}, which in turn 
 is obtained from Stokes theorem (with vector field $\x$) by:
 \[\int_G \mathrm{Div}(\x\cdot\x^\balpha\,(g(\x)-1))\,d\x\,=\,\int_{\partial G}\frac{\la\nabla g(\x),\x\ra}{\Vert\nabla g(\x)\Vert} \x^\balpha\,(g(\x)-1)\,d\sigma(\x)\,=\,0\,,\]
 for every $\balpha\in\N^d$. 
 \end{rem}
\subsection{Variational properties}
One next provides three variational formulations associated with $g$ and $\mu$. Let
$\Vert \cdot\Vert_1$ (resp. $\Vert\cdot\Vert_2$) be the $L^1(\mu)$-norm (resp. $L^1(\mu)$-norm).
\begin{cor}
\label{intro-cor3}
With $0<g\in\mathcal{H}[\x]_{2n}$ and $\mu=\exp(-g(\x))d\x$:
\begin{eqnarray}
\label{intro-cor3-1}
g&=&c_{2n}\,\arg\min_{q\in\H_{2n}[\x]}\Vert q-1\Vert^2_2\\
\label{intro-cor3-2}
g&=&\arg\min_{q\in\H_{2n}[\x]}\{\,\Vert q\Vert_2\,:\: \Vert q\Vert_1\,=\,\Vert g\Vert_1\,\}
\end{eqnarray}
\end{cor}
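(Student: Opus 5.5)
The plan is to work throughout in the $\mu$-orthonormal basis $\bP_{2n}(\x)$ of $\H[\x]_{2n}$ and to use Theorem \ref{intro-th1}. In this basis $L^2(\mu)$-geometry on $\H[\x]_{2n}$ becomes Euclidean geometry on coefficient vectors. Writing $q=\la\tilde{\mathbf q},\bP_{2n}\ra$, one has $\Vert q\Vert_2=\Vert\tilde{\mathbf q}\Vert$ and $\int q\,d\mu=\la\tilde{\mathbf q},\tbmu^{(2n)}\ra$. Both variational statements then reduce to elementary finite-dimensional problems whose solution is proportional to $\tbmu^{(2n)}$. By \eqref{intro-th21-0}, $\tbmu^{(2n)}$ is exactly $\tg/c_{2n}$.

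For \eqref{intro-cor3-1} I expand
\[\Vert q-1\Vert_2^2=\Vert\tilde{\mathbf q}\Vert^2-2\la\tilde{\mathbf q},\tbmu^{(2n)}\ra+Z(g).\]
This is a strictly convex quadratic in $\tilde{\mathbf q}$. Its unique minimizer is $\tilde{\mathbf q}^*=\tbmu^{(2n)}$, i.e. $q^*=\la\bP_{2n},\tbmu^{(2n)}\ra$, which is the orthogonal projection of the constant $1$ onto $\H[\x]_{2n}$ in $L^2(\mu)$. By \eqref{intro-th21-1}, $q^*=g/c_{2n}$, which gives \eqref{intro-cor3-1}. Uniqueness comes from strict convexity, equivalently from $\tHM_{2n}(\mu)=\I\succ0$.

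For \eqref{intro-cor3-2} I first note that $g\geq0$, so $\Vert g\Vert_1=\int g\,d\mu=\la\tg,\tbmu^{(2n)}\ra=c_{2n}\Vert\tbmu^{(2n)}\Vert^2$. I would then minimize $\Vert\tilde{\mathbf q}\Vert$ over $\tilde{\mathbf q}$ subject to the linear constraint $\la\tilde{\mathbf q},\tbmu^{(2n)}\ra=\Vert g\Vert_1$. By Cauchy--Schwarz in $\R^{s_{2n}}$,
\[\Vert\tilde{\mathbf q}\Vert\geq \Vert g\Vert_1/\Vert\tbmu^{(2n)}\Vert,\]
with equality iff $\tilde{\mathbf q}$ is the positive multiple of $\tbmu^{(2n)}$ meeting the constraint. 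That multiple is $c_{2n}\tbmu^{(2n)}=\tg$, so the unique minimizer is $q=g$.

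The main obstacle is the gap between the linear constraint $\int q\,d\mu=\Vert g\Vert_1$ and the literal constraint $\int|q|\,d\mu=\Vert g\Vert_1$. The argument above covers every competitor $q\geq0$, for which the two constraints coincide. A sign-changing $q$ satisfies $\int|q|\,d\mu\geq|\la\tilde{\mathbf q},\tbmu^{(2n)}\ra|$, and this inequality goes the wrong way to yield a lower bound on $\Vert\tilde{\mathbf q}\Vert$ directly.

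\begin{itemize}
\item I would first try to restrict the feasible set to nonnegative forms, which contains $g$, or equivalently to read $\Vert q\Vert_1$ as the signed integral $\int q\,d\mu$.
\item The fallback, plain Cauchy--Schwarz $\int|q|\,d\mu\leq\Vert q\Vert_2\sqrt{Z(g)}$, is not sharp. Corollary \ref{intro-cor1} gives $Z(g)=\frac{d+2n}{d}\Vert\tbmu^{(2n)}\Vert^2>\Vert\tbmu^{(2n)}\Vert^2$, because $1\notin\H[\x]_{2n}$.
\end{itemize}

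So the full-$L^1$ version would need a separate argument, and I expect the statement to be intended under the nonnegativity, or signed-integral, reading.
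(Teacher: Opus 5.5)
Your argument for \eqref{intro-cor3-1} is correct and is the paper's proof, written in the orthonormal basis $\bP_{2n}$ instead of the monomial basis. For \eqref{intro-cor3-2}, however, there is a genuine gap. The constraint is $\Vert q\Vert_1=\int|q|\,d\mu$, and you prove the claim only for nonnegative competitors, or equivalently under the signed constraint $\int q\,d\mu=\Vert g\Vert_1$. You leave sign-changing $q$ open and propose reinterpreting the statement. The paper proves it as written, for every $q\in\mathcal{H}[\x]_{2n}$, so no reinterpretation is needed.

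Your Cauchy--Schwarz step is, in disguise, $\int q\,g\,d\mu\le\Vert q\Vert_2\Vert g\Vert_2$, since $\la\tilde{\mathbf q},\tbmu^{(2n)}\ra=\int q\,d\mu=c_{2n}^{-1}\int q\,g\,d\mu$ by \eqref{aux-1}. The missing idea is to apply the same inequality to $|q|$ instead of $q$. Since $|q|\notin\mathcal{H}[\x]_{2n}$, your orthonormal-basis reduction cannot see this step, but the identity it needs still holds. For any continuous, positively homogeneous $f$ of degree $s$, one has $\int f\,g\,d\mu=\frac{d+s}{2n}\int f\,d\mu$. This is the identity the paper invokes. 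To see it, note that the change of variables $\x\mapsto t\x$ shows $t\mapsto t^{d+s}\int f(\x)\,e^{-t^{2n}g(\x)}\,d\x$ equals $\int f\,d\mu$ for all $t>0$, so its derivative at $t=1$ vanishes. Taking $f=|q|$ and $f=g$ gives $\int|q|\,g\,d\mu=c_{2n}\Vert q\Vert_1$ and $\Vert g\Vert_2^2=c_{2n}\Vert g\Vert_1$. Hence for every feasible $q$,
\[\Vert g\Vert_2^2\,=\,c_{2n}\Vert q\Vert_1\,=\,\int|q|\,g\,d\mu\,\le\,\Vert q\Vert_2\,\Vert g\Vert_2\,,\]
which is the paper's computation $0\le\int(|q|-g)^2\,d\mu=\Vert q\Vert_2^2-\Vert g\Vert_2^2$. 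Note also that under the literal constraint, $-g$ is feasible with the same norm, and equality forces only $|q|=g$, i.e.\ $q=\pm g$. So the minimizer is unique only up to sign, and your uniqueness claim holds only in your restricted reading.
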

\begin{proof}
With $q(\x)=\la\q,\v_{2n}(\x)\ra$ and as $\mu(\R^d)$ is a constant,  the optimization problem \eqref{intro-cor3-1} reads
\[\min_{q\in\mathcal{H}[\x]_{2n}}\int q^2d\mu -2\int q\,d\mu\,=\,\min_{\q}\,\q^T\HM_{2n}(\mu)\q-2\la \q,\bmu^{(2n)}\ra\,,\]
which is a convex optimization problem with strictly convex cost. Hence the unique critical point $\q^*$ satisfies
$\HM_{2n}(\mu)\,\q^*=\bmu^{(2n)}$, and therefore in view of \eqref{intro-monom-1},
$q^*=g/c_{2n}$, which yields the desired result \eqref{intro-cor3-1}.

Next, consider problem \eqref{intro-cor3-2} and observe that since 
$0<g\in\mathcal{H}[\x]_{2n}$, and $\vert h\vert$ is continuous and positively homogeneous of degree
$2n$, 
\begin{eqnarray*}
\int_{\R^d} \vert q\vert \,g\,\exp(-g)\,d\x&=&\frac{d+2n}{2n}\int_{\R^d}\vert q\vert\,\exp(-g)\,d\x
\,=\,\frac{d+2n}{2n}\Vert q\Vert_1\\
\Vert g\Vert^2_2\,=\,\int g^2\,\exp(-g)\,d\x&=&\frac{d+2n}{2n}\int g\,\exp(-g)\,d\x\,=\,\frac{d+2n}{2n}\Vert g\Vert_1\,,
\end{eqnarray*}
where we have used $0<g$ to obtain $\int g\,d\mu=\Vert g\Vert_1$.
Hence  as  $\Vert q\Vert_1=\Vert g\Vert_1$,
\begin{eqnarray*}
0\,\leq\, \int (\vert q\vert -g)^2\,d\mu&=&\Vert q\Vert_2^2-2\,\int \vert q\vert \,g\,d\mu+\Vert g\Vert^2_2\\
 &=&\Vert q\Vert_2^2-2\,\frac{d+2n}{2n}\Vert q\Vert_1+\frac{d+2n}{2n}\Vert g\Vert_1\\
  &=&\Vert q\Vert_2^2-\,\frac{d+2n}{2n}\Vert g\Vert_1\,=\,\Vert q\Vert_2^2-\,\Vert g\Vert^2_2\,,
  \end{eqnarray*}
and therefore 
  $\Vert q\Vert_2^2\geq\Vert g\Vert_2^2$, which in turn implies the desired result \eqref{intro-cor3-2}.
 \end{proof} 
 So $g$ is proportional to the best $L^2(\mu)$-approximation of the constant polynomial
 equal to $1$, by a degree-$2n$ form, and in addition, $g$ minimizes the $L^2(\mu)$ norm
 over all degree-$2n$ forms with same  $L^1(\mu)$-norm as $g$.
 
 Next one shows that $g$ is also the unique optimal solution of a max-entropy problem. Let 
 $L^1$ be the space of integrable functions w.r.t. Lebesgue measure.
  \begin{lem}
 Under Assumption \ref{ass-1}, let  $d\mu=\exp(-g)d\x$, $1/c^*:=\int \exp(-g)\,d\x$, and consider the 
 Max-Entropy problem:
 \begin{equation}
 \label{maxent}
 \begin{array}{rl}
 \tau\,=\displaystyle\min_{0<q\in L^1}&\{\,\displaystyle\int_{\R^d} q(\x)\,\ln q(\x)\,d\x: \:\displaystyle\int_{\R^d} q(\x)\,d\x=1\,;\\
 &\displaystyle\int_{\R^d} \v_{2n}(\x)\,q(\x)\,d\x\,=\,c^*\,\bmu^{(2n)}\,\}\,.\end{array}
 \end{equation}
 Then $q^*:=c^*\exp(-g)$ is the unique optimal solution of \eqref{maxent}.
 \end{lem}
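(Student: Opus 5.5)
The plan is the standard Gibbs-inequality argument for maximum entropy with exponential-family densities. Here the key point is that the log of the candidate density is affine in the constrained statistics $\v_{2n}$.

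\emph{Feasibility of $q^*$.} First check that $q^*=c^*\exp(-g)$ is admissible. It is positive and integrable by Assumption \ref{ass-1}, and $\int q^*\,d\x=1$ by the definition of $c^*$. Moreover
\[\int \v_{2n}\,q^*\,d\x=c^*\int\v_{2n}\,e^{-g}\,d\x=c^*\bmu^{(2n)},\]
so the moment constraints hold. Also $q^*\ln q^*=q^*(\ln c^*-g)$ is integrable, because $g\,e^{-g}$ is.

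\emph{Value of $q^*$ depends only on the constraints.} Write $g(\x)=\la\g,\v_{2n}(\x)\ra$. Then $\ln q^*=\ln c^*-\la\g,\v_{2n}\ra$. Hence for every feasible $q$,
\[\int q\ln q^*\,d\x=\ln c^*-\la \g,\int\v_{2n}q\,d\x\ra=\ln c^*-c^*\la\g,\bmu^{(2n)}\ra.\]
This value does not depend on $q$, so it equals $\int q^*\ln q^*\,d\x$. One may note that $c^*\la\g,\bmu^{(2n)}\ra=c^*\int g\,d\mu=d/(2n)$, by the identity used in Corollary \ref{intro-cor1}, although the exact value is not needed. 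The point is that $\int (q-q^*)\ln q^*\,d\x=0$ for every feasible $q$.

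\emph{Gibbs inequality.} For feasible $q$ with $\int q\ln q<\infty$ (otherwise there is nothing to prove), write
\[\int q\ln q\,d\x-\int q^*\ln q^*\,d\x=\int q\ln q\,d\x-\int q\ln q^*\,d\x=\int q\ln\frac{q}{q^*}\,d\x=\mathrm{KL}(q\,\Vert\, q^*).\]
Both $q$ and $q^*$ are probability densities, and $q^*>0$ everywhere. Apply $t\ln t\ge t-1$ with $t=q/q^*$, weighted by $q^*$; equivalently, use Jensen on the convex function $t\ln t$. This gives $\mathrm{KL}(q\Vert q^*)\ge \int(q-q^*)\,d\x=0$. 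Equality holds iff $q/q^*=1$ a.e., since $t\ln t-t+1>0$ for $t\neq1$. Hence $\tau=\int q^*\ln q^*\,d\x$, and $q^*$ is the unique minimizer up to Lebesgue-null sets.

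\emph{Main obstacle.} The only delicate point is integrability. We must make sure that $\int q\ln q^*$ is well defined, so that the splitting $\int q\ln q-\int q\ln q^*$ is legitimate. This is handled by $q\ln q^*=q(\ln c^*-g)$: here $q$ has finite degree-$2n$ moments by feasibility, so $\int q\,g<\infty$. The negative part of $q\ln q$ is also controlled by this comparison, so the KL divergence is well defined in $[0,\infty]$. With these remarks the argument is complete.
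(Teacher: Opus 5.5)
Your proof is correct. It reaches the result by a more direct route than the paper, although the analytic core is the same.

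\textbf{The paper's route.} The paper argues by convex duality. It introduces the log-partition function $\Psi(\blambda)=\ln\int\exp(\la\blambda,\v_{2n}(\x)\ra)\,d\x$. It proves weak duality, $\int q\ln q\,d\x\ge\la\blambda,c^*\bmu^{(2n)}\ra-\Psi(\blambda)$ for every $\blambda$ and every feasible $q$, via Jensen's inequality. It then checks that the primal value at $q^*$ equals the dual value at $\blambda^*=-\g$. Finally it asserts uniqueness from strict convexity.

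\textbf{Relation to your argument.} Your Gibbs inequality is exactly that weak-duality inequality specialized to $\blambda=-\g$. Indeed, for feasible $q$,
$\la-\g,c^*\bmu^{(2n)}\ra-\Psi(-\g)=\ln c^*-\int g\,q\,d\x=\int q\ln q^*\,d\x$.
The difference is in what you use around it. You dispense with the dual problem entirely and rely only on the fact that $\int q\ln q^*\,d\x$ is constant on the feasible set. You also get uniqueness (up to null sets) from the equality case of $t\ln t\ge t-1$, rather than from a one-line appeal to strict convexity.

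\textbf{What each approach buys.} The paper's framing gives the interpretation $\tau=\Psi^*(c^*\bmu^{(2n)})$ with $-\g$ as the optimal multiplier, which places the lemma in the exponential-family and Legendre-transform picture. Your version is shorter. It also explicitly handles integrability: finiteness of $\int g\,q\,d\x$ from the moment constraints, and well-definedness of $\int q\ln q\,d\x$ in $(-\infty,+\infty]$ via $q\ln q\ge q-q^*+q\ln q^*$. The paper leaves these points implicit in its Jensen step.

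\textbf{A note on the constant.} Your value $c^*\int g\,d\mu=d/(2n)$ is the correct one. The paper's strong-duality computation writes $\ln c^*-c_{2n}$, using $\int g\,e^{-g}\,d\x=c_{2n}\int e^{-g}\,d\x$. That identity holds for $\x^{\balpha}g$ with $\vert\balpha\vert=2n$, but not for $\balpha=0$. As you observe, the exact value is irrelevant because the same quantity appears on both sides, so this slip does not affect the paper's conclusion.
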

 \begin{proof}
 Define the log-partition function
 \[\blambda\mapsto \Psi(\blambda)\,:=\,\left\{\begin{array}{rl}
 \ln \int \exp(\la\blambda,\v_{2n}(\x)\ra)\,d\x&\mbox{if the integral is finite}\\
 +\infty&\mbox{otherwise,}\end{array}\right.\]
  and its associated Legendre-Fenchel transform
 \[\Psi^*(\y)\,=\,\sup_{\blambda}\la \blambda,\y\ra -\Psi(\blambda)\,,\,\quad\forall \y\,,\]
  Then the optimization problem
  \begin{equation}
 \label{maxent-dual}
 \Psi^*(c^*\bmu^{(2n)})\,=\,\sup_{\blambda}\:\la \blambda,c^*\bmu^{(2n)}\ra -\Psi(\blambda)\,,\end{equation}
 is a \emph{dual} problem associated with the primal problem \eqref{maxent}.  Indeed let us show that \emph{weak duality} holds. Let $0<q\in L^1$ be any feasible solution for the primal  \eqref{maxent}. Then as $\int q(\x)d\x=1$,
 
 \begin{eqnarray*}
 \ln \int \exp(\la\blambda,\v_{2n}(\x))\,d\x&=&\ln \int \frac{\exp(\la\blambda,\v_{2n}(\x))}{q(\x)}\,q(\x)\,d\x\\
 &\geq&\int \ln\frac{\exp(\la\blambda,\v_{2n}(\x))}{q(\x)}q(\x)\,d\x\quad\mbox{[by Jensen's inequality]}\\
 &=&\int \la\blambda,\v_{2n}(\x)\ra q(\x)\,d\x-\int q(\x)\,\ln q(\x)\,d\x\\
 &=&c^*\la\blambda,\bmu^{(2n)}\ra -\int q(\x)\,\ln q(\x)\,d\x\,,
 \end{eqnarray*}
 which yields 
 $\int q(\x)\,\ln q(\x)\,d\x\geq \la\blambda,c^*\bmu^{(2n)}\ra -\Psi(\blambda)$. As 
 $\blambda$ and $q$ were arbitrary, one obtains
  the desired result 
  \[\inf_{0<q}\int q(\x)\,\ln q(\x)\,d\x\,\geq\,\sup_{\blambda}
   \la\blambda,c^*\bmu^{(2n)}\ra -\Psi(\blambda)\,=\,\Psi^*(c^*\bmu^{(2n)})\,.\]
 We next show that strong duality holds. Let $\x\mapsto q^*(\x):=c^*\exp(-g(\x))$, which by construction is admissible for \eqref{maxent}, and recall  $1/c^*=\int\exp(-g(\x))\,d\x$.  
 Then 
 \begin{eqnarray*}
 \int q^*(\x)\,\ln q^*(\x)\,d\x&=&\ln c^*\,\underbrace{\int q^*d\x}_{=1}-c^*\int g(\x)\exp(-g(\x))\,d\x\\
 &=&\ln c^*-c^*\,c_{2n}\int \exp(-g(\x))\,d\x\\
 &=&\ln c^*-c_{2n}\,.
 \end{eqnarray*}
On the other hand with $\blambda^*=-\g$, one obtains
\begin{eqnarray*}
 \la-\g,c^*\bmu^{(2n)}\ra -\Psi(-\g)&=&-c^*\int g(\x)\exp(-g(\x))\,d\x-\ln\int \exp(-g(\x))\,d\x\\
 &=&-c^*\,\frac{c_{2n}}{c^*} +\ln c^*\,=\,\ln c^*-c_{2n}\,,
 \end{eqnarray*}
which implies that $q^*$ (resp. $\blambda^*$) is an optimal solution of \eqref{maxent} 
(resp.  \eqref{maxent-dual}) as both are feasible with same value. Uniqueness follows from the strict convexity 
of  $\int q\ln q\,d\x$ and $\Psi(\blambda)$.
 \end{proof}
 
 \subsection{Link with Ward's identities}
  Recall that the identities \eqref{aux-0} can be deduced from
 Stokes theorem applied to integration w.r.t. $d\x$ on $G$; see Remark \ref{rem-stokes}. But
 \eqref{aux-0} is also related to Ward's identities derived with an infinitesimal  rescaling argument. 
 
 Consider the infinitesimal scaling
 $\x\mapsto \x':=(1+\varepsilon)\x$ with $\varepsilon\ll1$. Under this change of variables,
 \[d\x'\,=\,(1+d\,\varepsilon)d\x+O(\varepsilon^2)\]
 Similarly, by homogeneity,
 \[g(\x')\,=\,g((1+\varepsilon)\x)\,=\,g(\x)+2n\,\varepsilon\,g(\x)+O(\varepsilon^2)\,,\]
 and therefore,  the Boltzmann factor becomes
 \[\exp(-g(\x'))\,=\,\exp(-g(\x))(1-2n\,\varepsilon\,g(\x))+O(\varepsilon^2)\]
 By invariance of $Z(g)$ under the change of variables, expansion to first order in $\varepsilon$ yields
 \[\int_{\R^d}(d-2n\,g(\x))\,\exp(-g(\x)))\,d\x\,=\,0\,\]
 which yields the first Ward's identity
 \[\frac{\int_{\R^d} g(\x)\,d\mu}{Z(g)}\,=\,\frac{d}{2n}\,.\]
 Next, for an arbitratry smooth $F(\x)$, proceding in a similar manner,
 \[F(\x')\,=\,F((1+\varepsilon)\,\x)\,=\,F(\x)+\varepsilon\,\la \x,\nabla F(\x)\ra+O(\varepsilon^2)\]
 and expanding the integral to first order yields
 \[\int_{\R^d}[\la\x,\nabla F(\x)\ra+(d-2n\,g(\x))\,F(\x)]\,\exp(-g(\x))\,d\x\,=\,0\,.\]
 Specializing to $F(\x)=\x^\balpha$ with $\balpha\in\N^d$, yields
 \[\int_{\R^d}\x^\balpha\,g(\x)\,\exp(-g(\x))\,d\x\,=\,
 \frac{d+\vert\balpha\vert}{2n}\,\int_{\R^d}\x^\balpha\,\exp(-g(\x))\,d\x\,,\]
 which is exactly \eqref{aux-1}.
 \section{A numerical scheme}
   Recall that $Z(g)=\Gamma(1+(d+2n)/(2n))\mathrm{vol}(G)$ with $G:=\{\x: g(\x)\leq 1\}$. 
  Then next, one describes a numerical scheme to approximate as closely as desired 
  $\mathrm{vol}(G)$ (and hence $Z(g)$ as well). 
  
  Let $\B\subset\R^d$ be the Euclidean unit ball (or the unit box $[-1,1]^d$),
  and let $\N^d_{\leq n}:=\{\balpha\in\N^d:\vert\balpha\vert\leq n\}$. As $0<g$, its level set $G$ is compact and 
  we may and will assume that $G\subset \B$ (possibly after rescaling). Let
  $\lambda$ be the Lebesgue measure restricted to $\B(0,1)$. Let
  $\mathscr{M}(\B)_+$ the space of finite Borel measures on $\B$. Then 
  \begin{equation}
  \label{LP-primal}
  \mathrm{vol}(G)\,=\,\max_{\phi\in\mathscr{M}(\B)_+} \{\,\phi(1):\:\phi\leq \lambda\,\}\,,\end{equation}
  i.e., $\mathrm{vol}(G)$ is the optimal value of an infinite-dimensional linear program, with  
  $\phi^*(d\x)=1_G(\x)\lambda(d\x)$ as optimal solution. A  dual of \eqref{LP-primal} reads
  \begin{equation}
  \label{LP-dual}
  \inf_{0\leq p\in \R[\x]}\,\{\, \int_{\B} p\,d\lambda:\: p\geq 1_G\,\}\,=\,\inf_{0\leq p\in\R[\x]}
  \{\,\Vert p\Vert_{L^1(\B)}: \:p\geq 1_G\,\}\,.\end{equation}
 One way to approximate $\mathrm{vol}(G)$ is to consider \eqref{LP-primal} as Generalized Moment Problem (GMP)
 and apply the Moment-SOS hierarchy described in e.g. \cite{acta}. It consists in solving a hierarchy of (finite-dimensional) semidefinite \emph{relaxations} of the infinite-dimensional LP \eqref{LP-primal} 
 whose associated sequence of optimal values is monotone non increasing and converges to $\mathrm{vol}(G)$. 
 At step $n$ of the hierarchy, one solves  
 \begin{equation}
 \label{sdp-t}
 \rho_t\,=\,\max_{\bphi}\,\{\,\phi_0:\: 0\,\preceq\,\M_t(\bphi)\,\preceq\,\M_t(\lambda)\,\}\end{equation}
 where the unknown $\bphi=(\phi_\balpha)_{\balpha\in\N^d_{\leq 2t}}$ is 
 a vector of pseudo-moments (of degree at most $2t$) and $\M_t(\bphi)$ is the moment matrix associated with $\bphi$.
 Its rows and columns are indexed by $(\x^\balpha)_{\balpha\in\N^d_{\leq t}}$, and with entries
 \[\M_t(\bphi)(\balpha,\bbeta)\,=\,\phi_{\balpha+\bbeta}\,,\quad\forall \balpha,\bbeta\in\N^d_{\leq t}\,.\]
 The optimization problem \eqref{sdp-t} is a \emph{semidefinite program} (SDP), a class of convex conic optimization problems that can be solved efficiently.  The dual of \eqref{sdp-t} is also a semidefinite program, which reads:
 \begin{equation}
 \label{sdp-t-dual}
 \rho^*_t\,=\,\max_{p\in\Sigma[\x]_t}\,\{\,\int_\b p\,d\lambda: p\geq 1_G\,\}
  \end{equation}
  where $\Sigma[\x]_t$ is the space of sum-of-squares (SoS) polynomials of degree at most $2t$. When
  $G$ has nonempty interior then $\rho_t=\rho^*_t$ for all integer $t$, and $\rho_t\downarrow \mathrm{vol}(G)$ as $t\to\infty$.
  For more details 
  on the moment hierarchy and its application to solve \eqref{LP-primal} the interested reader is referred to e.g.
  \cite{acta} and \cite{Review} respectively.
  
  From the formulation \eqref{sdp-t-dual} of the dual, the convergence 
  $\rho^*_t\to\mathrm{vol}(G)$ is expected to be slow because one minimizes $\Vert p-1_G\Vert_{L^1(\B)}$
 by constructing a converging sequence $(p_t)_{t\in\N}\subset\Sigma[\x]$ (where $p_t$
 is a degree-$2t$ SOS). It is well-known that approximating a discontinuous function by
  a polynomial is tedious and prone to the so-called Gibbs phenomenon (oscillations at discontinuity and boundary points
  of $\B$).
  
  Let $g(\x)=\sum_{\bbeta\in\N^d_{2n}}g_\bbeta\,\x^\bbeta$. As $\phi^*=1_G(\x)\,d\x$ is the optimal solution of 
  \eqref{LP-primal},  by \eqref{aux-0}, it satisfies the linear Stokes moments constraints:
 \[\sum_{\bbeta\in\N^d_{2n}}g_{\bbeta}\,\phi^*_{\balpha+\bbeta}\,=\,\frac{d+\vert\balpha\vert}{d+2n+\vert\balpha\vert}\phi^*_{\balpha}\,,\quad \forall \balpha\in\N\,;\]
 see \cite[Lemma 1]{recovery} and Remark \ref{rem-stokes}.
 Therefore, for every $t\geq n$, one may and will include the additional linear moment constraints 
  \[\sum_{\bbeta\in\N^d_{2n}}g_{\bbeta}\,\phi^*_{\balpha+\bbeta}\,=\,\frac{d+\vert\balpha\vert}{d+2n+\vert\balpha\vert}\phi^*_{\balpha}\,,\quad \forall \balpha\in\N^d_{\leq 2(t-n)}\,,\]
  in the semidefinite program \eqref{sdp-t}, to obtain the stronger semidefinite relaxations
  \begin{equation}
 \label{sdp-t-stokes}
 \begin{array}{rl}
 \delta_t\,=\,\displaystyle\max_{\bphi}&\{\,\phi_0:\: 0\,\preceq\,\M_t(\bphi)\,\preceq\,\M_t(\lambda)\,;\\
 &\displaystyle\sum_{\bbeta\in\N^d_{2n}}g_{\bbeta}\,\phi^*_{\balpha+\bbeta}\,=\,\frac{d+\vert\balpha\vert}{d+2n+\vert\balpha\vert}\phi^*_{\balpha}\,,\quad \forall \balpha\in\N^d_{\leq 2(t-n)}\,\}\,.
 \end{array}
 \end{equation}
 The set of feasible solutions $\bphi$ of \eqref{sdp-t-stokes} is strictly contained in that of \eqref{sdp-t} and 
 it turns out that the convergence $\delta_t\downarrow \mathrm{vol}(G)$ as $t\to\infty$, is much faster than
 the convergence $\rho_t\downarrow \mathrm{vol}(G)$. 
  One rationale behind 
 this acceleration has been provided in \cite{stokes} with some numerical illustrative examples
 
\section{Conclusion}
 In this work we have investigated integral discriminants $Z(g)=\int \exp(-g)d\x$
 where  $g$ is a nonnegative form. These integrals generalize Gaussian partition functions and arise naturally in several contexts, including statistical mechanics, quantum field theory, and exponential families.
Our first main result is the identification of a Gaussian-like fixed-point relation between the coefficients of 
$g$ and the degree-$2n$ moments of the associated Boltzmann measure 
$d\mu=\exp(-g)d\x$. This relation extends the familiar correspondence between quadratic forms and covariances for Gaussian measures to arbitrary homogeneous polynomial actions. It provides an intrinsic characterization of the action in terms of its own moments and yields a natural higher-degree analogue of the Gaussian covariance identity.
We then established several variational characterizations of the polynomial action. In particular, we showed that 
$g$ can be recovered as the solution of variational problems involving  
$L^2(\mu)$-approximation and constrained minimization among homogeneous forms of fixed degree. Moreover, the normalized density proportional to $\exp(-g)$
is shown to solve a maximum-entropy problem under suitably scaled moment constraints, thereby extending the classical extremality of Gaussian distributions to the setting of higher-degree homogeneous actions.

These results provide a unified perspective on integral discriminants, connecting their algebraic structure, their moment representation, and their variational properties. They clarify in which sense homogeneous polynomial actions behave as higher-order analogues of quadratic Gaussian actions and make precise the role played by scale invariance and homogeneity in this correspondence.

From a computational viewpoint, the fixed-point relations and the associated moment identities 
have supplied natural additional linear moment constraints, which when incorporated  to 
a numerical scheme based on truncated moment hierarchies, dramatically accelerate its convergence. This opens the way to practical approximation of partition functions and correlators for non-quadratic actions using convex optimization techniques.

An natural direction for future work would be to extend the present framework beyond homogeneous polynomials, for instance to sums of homogeneous components or to non-polynomial convex actions. 
Overall, the fixed-point and variational viewpoint developed here provides a conceptual and technical bridge between Gaussian theory and more general polynomial actions, and may serve as a useful tool for both analytical and numerical investigations of non-Gaussian partition functions.
 
\end{document}